\newcommand{\bracketed}[2]{\genfrac{[}{]}{0pt}{0}{#1}{#2}}
\newcommand{\braced}[2]{\genfrac{\{}{\}}{0pt}{0}{#1}{#2}}
\newtheorem{thm}{Theorem}[section]
\newtheorem{defn}[thm]{Definition}
\newtheorem{rema}[thm]{Remark}
\def\ds{\displaystyle}
\title{\bf A $q$-Analogue of $r$-Whitney Numbers of the\\ Second Kind and Its Hankel Transform}
\author{{\large\bf Roberto B. Corcino}\\{\large\bf Jay M. Ontolan}\\ {\large\bf Jennifer Ca\~{n}ete}\\ 
Research Institute for Computational Mathematics and Physics\\
Cebu Normal University\\
Cebu City, Philippines 6000\\\\
{\large\bf Mary Joy R. Latayada}\\
Department of Mathematics\\
 Caraga State University\\
  Butuan City, Philippines 8600 
}
\date{}
\begin{document}

\maketitle

\begin{abstract}
	A $q$-analogue of $r$-Whitney numbers of the second kind, denoted by $W_{m,r}[n,k]_q$, is defined by means of a triangular recurrence relation. In this paper, several fundamental properties for the $q$-analogue are established including other forms of recurrence relations, explicit formulas and generating functions. Moreover, a kind of Hankel transform for $W_{m,r}[n,k]_q$ is obtained. 

\bigskip
\noindent{\bf Keywords}: $r$-Whitney numbers, generating function, $q$-exponential function, symmetric function, Hankel transform
\end{abstract}

\section{Introduction}
Several mathematicians developed a way of obtaining a generalization of some special numbers. One generalization is a $q$-analogue of these special numbers. A $q$-analogue is a mathematical expression parameterized by a quantity $q$ that generalizes a known expresssion and reduces to the known expression in the limit, as $q \rightarrow 1$. For instance, a polynomial $\ds a_k(q)$ is a $q$-analogue of the polynomial $a_k$ if $\lim\limits_{q\rightarrow 1}a_k(q)=a_k$. The $q$-analogue of $n$, $n!$, $(n)_k$ and $\dbinom{n}{k}$ are respectively given by
\begin{align*}
[n]_q &= 1+q+q^2+ \cdots +q^{n-1} = \frac{1-q^n}{1-q};\\
[n]_q! &= [n]_q[n-1]_q\cdots[2]_q[1]_q;\\
[n]_{k,q} &=[n]_q[n-1]_q\cdots[n-k+1]_q;\\
\begin{bmatrix}n\\k\end{bmatrix}_q &= \frac{[n]_q!}{[k]_q![n-k]_q!}=\frac{[n]_{k,q}}{k!}.
\end{align*}
The polynomials $\bracketed{n}{k}$ are usually called the {\it $q$-binomial coefficients}. These polynomials possess several properties including the $q$-binomial inversion formula
\begin{equation}\label{e1}
f_n=\sum\limits_{k=0}^{n}\begin{bmatrix}n\\k\end{bmatrix}_qg_k\Longleftrightarrow g_n=\sum\limits_{k=0}^{n}(-1)^{n-k}q^{\binom{n-k}{2}}\begin{bmatrix}n\\k\end{bmatrix}_qf_k.
\end{equation}
and certain generating function
\begin{equation*}
\sum\limits_{k=0}^{n} q^{\binom{k}{2}} \begin{bmatrix}n\\k\end{bmatrix}_q x^k = (1+x)(1+xq)(1+xq^2)\cdots (1+xq^{n-1}).
\end{equation*}
The $q$-binomial inversion formula is a $q$-analogue of the binomial inversion formula \cite{Com}
\begin{equation}\label{inv1}
f_n=\sum\limits_{k=0}^{n}\binom{n}{k}g_k\Longleftrightarrow g_n=\sum\limits_{k=0}^{n}(-1)^{n-k}\binom{n}{k}f_k.
\end{equation}

\smallskip
A $q$-analogue of both kinds of Stirling numbers was first defined by Carlitz in \cite{Car1}. The second kind of which, known as $q$-Stirling numbers of the second kind, is defined in terms of the following recurrence relation
\begin{equation}
S_q[n,k]=S_q[n-1,k-1]+[k]_qS_q[n-1,k]
\end{equation}
in connection with a problem in Abelian groups, such that when $q\to1$, this gives the triangular recurrence relation for the classical Stirling numbers of the second kind $S(n,k)$
\begin{equation}
S(n,k)=S(n-1,k-1)+kS(n-1,k).
\end{equation}
A different way of defining $q$-analogue of Stirling numbers of the second kind has been adapted in the paper by \cite{Ehr} which is given as follows
\begin{equation}\label{qSNEhr}
S_q[n,k]=q^{k-1}S_q[n-1,k-1]+[k]_qS_q[n-1,k].
\end{equation}
This type of $q$-analogue gives the Hankel transform of $q$-exponential polynomials and numbers which are certain $q$-analogue of Bell polynomials and numbers. 

\smallskip
Using the method of Aigner \cite{Aig} and Mezo \cite{Mezo3}, R. Corcino and C. Corcino \cite{Cor1} have successfully established the Hankel transform $H(G_{n,r,\beta})$ of the sequence of generalized Bell numbers which is given by
$$H(G_{n,r,\beta})=\prod_{j=0}^n\beta^jj!$$ 
where $G_{n,r,\beta}$ is the sum of $(r,\beta)$-Stirling numbers $\braced{n}{k}_{r,\beta}$, that is, 
\begin{equation*}
G_{n,r,\beta}=\sum_{k=0}^n\braced{n}{k}_{r,\beta}
\end{equation*}
(see \cite{Cor2, Cor3}). These numbers are also known as $(r,\beta)$-Bell numbers. In the same paper \cite{Cor1}, the authors have tried to establish the Hankel transform for the $q$-analogue of Rucinski-Voigt numbers \cite{Cor4}, which are also equivalent to $(r,\beta)$-Bell numbers.  However, the authors were not successful in their attempt. Recently, by introducing a new way of defining the $q$-analogue of Stirling-type and Bell-type numbers, R. Corcino et al.\cite{Cor0} were able to establish the Hankel transform for the $q$-analogue of non-central Bell numbers. One can easily verify that non-central Bell numbers are special case of $(r,\beta)$-Bell numbers (or Rucinski-Voigt numbers or $r$-Dowling numbers). In the desire to establish the Hankel transform of $q$-analogue of $r$-Dowling numbers, the present authors have made the initiative to define and investigate a $q$-analogue of $r$-Whitney numbers of the second kind parallel to the $q$-analogues of noncentral Stirling numbers of the second kind in \cite{Cor0}. It is worth-mentioning that $r$-Whitney numbers of the second kind and  $r$-Dowling numbers are equivalent to $(r,\beta)$-Stirling numbers and $(r,\beta)$-Bell numbers, respectively.

\bigskip
\section{A $q$-Analogue of $W_{m,r}(n,k)$ and Recurrence Relations}
Now, let us introduce the desired definition of the $q$-analogue of $r$-Whitney numbers of the second kind.
\begin{defn}
For non-negative integers $n$ and $k$, a q-analogue  $W_{m,r}[n,k]_{q}$ of  $W_{m,r}(n,k)$ is defined by
\smallskip
 \begin{equation}
 W_{m,r}[n,k]_{q}= q^{m(k-1)+r}W_{m,r}[n-1,k-1]_q+[mk+r]_{q} W_{m,r}[n-1,k]_{q}. \label{j}
\end{equation}
\end{defn}
\noindent where $W_{m,r}[0,0]_{q}=1, W_{m,r}[n,k]_{q}=0$ for $n<k$ or $n,k < 0 $ and $[t-k]_{q}=\frac{1}{q^{k}}([t]_{q}-[k]_{q})$.

\smallskip
\begin{rema} When $q \rightarrow 1 $, the above definition will reduce to the recurrence relation of the $r$-Whitney numbers of the second kind established by Mezo which is given by
\begin{equation*}
	W_{m,r}(n,k) = W_{m,r}(n-1,k-1) + (mk+r)W_{m,r}(n-1,k).
\end{equation*}	
\end{rema}

\smallskip
\begin{rema} It can easily be verified that
 \begin{equation*}
	W_{m,r}[n,0]=[r]_{q}^{n}.
	\end{equation*}
\end{rema}

By proper application of \eqref{j}, we can easily obtain two other forms of recurrence relations and certain generating function.

\begin{thm} For nonnegative integers $n$ and $k$, the $q$-analogue $W_{m,r}[n,k]_{q}$ satisfies the following vertical and horizontal recurrence relations:
	\begin{equation}\label{vertrr}
	W_{m,r}[n+1,k+1]_{q}=q^{mk+r}\sum_{j=k}^{n}[m(k+1)+r]_q^{n-j}W_{m,r}[j,k]_{q}
	\end{equation}
	
	\begin{equation}\label{horizrr}
	W_{m,r}[n,k]_{q}=\sum_{j=0}^{n-k}(-1)^{j}q^{-r-m(k+j)}\frac{{r}_{{k+j+1},q}}{{r}_{{k+1},q}}W_{m,r}[n+1,k+j+1]_{q}
	\end{equation}
respectively, where\\
$${r}_{i,q}=\prod_{h=1}^{i-1}q^{-r-mh+m}[mh+r]_{q}$$
with initial value $W_{m,r}[0,0]_{q}=1.$
	\end{thm}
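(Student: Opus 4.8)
The plan is to prove both recurrence relations by iterating the defining relation \eqref{j}. For the vertical recurrence \eqref{vertrr}, I would start from \eqref{j} with indices shifted so that the leading term carries $W_{m,r}[j,k]_q$, and repeatedly expand the second term on the right-hand side. Specifically, writing \eqref{j} as
\begin{equation*}
W_{m,r}[n+1,k+1]_q = q^{mk+r}W_{m,r}[n,k]_q + [m(k+1)+r]_q\,W_{m,r}[n,k+1]_q,
\end{equation*}
I would substitute the analogous expression for $W_{m,r}[n,k+1]_q$ into the second summand, then repeat, peeling off one copy of the $q^{mk+r}W_{m,r}[\cdot,k]_q$ term at each stage while accumulating a factor of $[m(k+1)+r]_q$. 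After $n-k$ iterations the recursion terminates because $W_{m,r}[k,k+1]_q=0$, and collecting the peeled terms gives exactly the claimed sum $\sum_{j=k}^{n}[m(k+1)+r]_q^{\,n-j}W_{m,r}[j,k]_q$ with the common prefactor $q^{mk+r}$. I expect this to be cleanest as a formal induction on $n$: assume the formula for $n$ and use \eqref{j} once to pass to $n+1$, checking that the index bookkeeping in the exponent $n-j$ works out.

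\textbf{For the horizontal recurrence} \eqref{horizrr}, the natural approach is inversion. I would rewrite \eqref{j} to solve for $W_{m,r}[n-1,k-1]_q$ in terms of $W_{m,r}[n,k]_q$ and $W_{m,r}[n-1,k]_q$, namely
\begin{equation*}
q^{m(k-1)+r}W_{m,r}[n-1,k-1]_q = W_{m,r}[n,k]_q - [mk+r]_q\,W_{m,r}[n-1,k]_q,
\end{equation*}
which expresses a lower-$k$ quantity via the difference of two higher-$k$ quantities. Iterating this relation, raising $k$ by one at each step, should telescope into the alternating sum over $j$ in \eqref{horizrr}. The products of the factors $[mh+r]_q$ accumulated along the way are precisely what the quantity $r_{i,q}=\prod_{h=1}^{i-1}q^{-r-mh+m}[mh+r]_q$ is designed to record, so the ratio $r_{k+j+1,q}/r_{k+1,q}$ and the power $q^{-r-m(k+j)}$ should emerge from carefully tracking the $q$-powers and the coefficients $[mk+r]_q,[m(k+1)+r]_q,\dots$ picked up at successive steps. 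Again a clean write-up would likely proceed by induction on the upper summation limit $n-k$.

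\textbf{The main obstacle} I anticipate is the exponent and product bookkeeping in the horizontal case: the factors $q^{m(k-1)+r}$ on the left and $[mk+r]_q$ on the right each shift as $k$ increases, so verifying that the telescoped coefficient collapses exactly to $(-1)^j q^{-r-m(k+j)}\,r_{k+j+1,q}/r_{k+1,q}$ requires matching the $q$-power contributed at each inversion step against the definition of $r_{i,q}$. A useful preliminary lemma would be to establish the one-step identity $r_{i+1,q}/r_{i,q}=q^{-r-mi+m}[mi+r]_q$ directly from the definition of $r_{i,q}$, so that the inductive step reduces to a single algebraic check rather than a full product manipulation. The vertical recurrence, by contrast, should be routine once the induction is set up, since it only involves the single coefficient $[m(k+1)+r]_q$ raised to a power and no telescoping of $q$-exponents beyond the fixed prefactor $q^{mk+r}$.
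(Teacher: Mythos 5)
Your proposal is correct and takes essentially the same route as the paper: the vertical recurrence is proved exactly as you describe, by repeatedly expanding the second summand of \eqref{j} and peeling off the $q^{mk+r}W_{m,r}[j,k]_q$ terms until the tail vanishes, and the horizontal recurrence rests on the same one-step use of \eqref{j} together with the telescoping of the factors recorded by $r_{i,q}$. The only cosmetic difference is in direction for \eqref{horizrr} --- the paper substitutes \eqref{j} into the claimed right-hand side and cancels, while you unroll the inverted recurrence forward --- and the bookkeeping identity $r_{i+1,q}/r_{i,q}=q^{-r-mi+m}[mi+r]_q$ that you flag as the key lemma is precisely what makes either direction close.
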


\begin{proof}
	Replacing $n$ by $n+1$ and $k$ by $k+1$ in (\ref{j}) gives 
	\begin{equation}
	W_{m,r}[n+1,k+1]_{q}=q^{m(k)+r}W_{m,r}[n,k]_{q}+[m(k+1)+r]_{q}W_{m,r}[n,k+1]
	\end{equation}
	Applying repeatedly by (\ref{j}) gives
	\begin{eqnarray}
	W_{m,r}[n+1,k+1]_{q}&=&q^{m(k)+r}W_{m,r}[n,k]_{q}+[m(k+1)+r]_{q}\nonumber\\
	&&(q^{mk+r} W_{m,r}[n-1,k]_{q}+ [m(k+1)+r]_{q}W_{m,r}[n-1,k]_{q})\nonumber\\
	&=&q^{m(k)+r}W_{m,r}[n,k]_{q}+[m(k+1)+r]_{q} W_{m,r}[n-1,k]_{q} +[m(k+1)+r]^2\nonumber \\
	&&(q^{mk+r} W_{m,r}[n-2,k]_{q}+[m(k+1)+r]q W_{m,r}[n-2,k])\nonumber\\
	&&+\cdots + [m(k+1)+r]_{q}^{n-k}( q^{mk+r} W_{m,r}[k+1,k+1]_{q})\nonumber
\end{eqnarray}
Using the fact that $W_{m,r}[k+1,k+1]_{q}=W_{m,r}[k,k]_{q}$, we obtain \eqref{vertrr}.  Now, to prove \eqref{horizrr}, we have to rewrite the right-hand side (RHS) of the relation using \eqref{j} as follows
\small\begin{align*}
  RHS&=\sum_{j=0}^{n-k}(-1)^{j}q^{-r-m(k+j)}\frac{{r}_{{k+j+1},q}}{{r}_{{k+1},q}}\\
 &\left(q^{m(k+j)+r}W_{m,r}[n,k+j]_{q}+[m(k+j+1)+r]_{q}W_{m,r}[n,k+j+1]_{q}\right)\\
 &=\sum_{j=0}^{n-k}(-1)^{j}q^{-r-m(k+j)}\frac{{r}_{{k+j+1},q}}{{r}_{{k+1},q}}{q^{m(k+j)+r}W_{m,r}[n,k+j]_{q}}\\
 &+\sum_{j=0}^{n-k}(-1)^{j}q^{-r-m(k+j)}\frac{{r}_{{k+j+1},q}}{{r}_{{k+1},q}}[m(k+j+1)+r]_{q}W_{m,r}[n,k+j+1]_{q}\\
 &=\sum_{j=0}^{n-k}(-1)^{j}\frac{{r}_{{k+j+1},q}}{{r}_{{k+1},q}}W_{m,r}[n,k+j]_{q}\\
 &+\sum_{j=1}^{n-k}(-1)^{j-1}q^{-r-m(k+j-1)}\frac{\prod_{h=1}^{k+j-1}q^{-r-mh+m}{[mh+r]_{q}}}{\prod_{h=1}^{k}q^{-r-mh+m}{[mh+r]_{q}}}[m(k+j)+r]_{q}W_{m,r}[n,k+j]_{q}\\
\end{align*}
Note that
\begin{eqnarray}
\prod_{h=1}^{k+j-1}q^{-r-mh+m}{[mh+r]}_{q} &=(q^{-r-m+m}[m+r]_{q})(q^{-r-m(2)+m}[m(2)+r]_{q})\nonumber\\
&\cdots(q^{-r-m(k+j-1)+m}[m(k+j-1)+r]_{q}).\nonumber
\end{eqnarray}
Now, we have
\begin{align*}
 RHS&=\sum_{j=0}^{n-k}(-1)^{j}\frac{{r}_{{k+j+1},q}}{{r}_{{k+1},q}}W_{m,r}[n,k+j]_{q}\\
 &+\frac{1}{\prod_{h=1}^{k}q^{-r-mh+m}{[mh+r]_{q}}}\sum_{j=1}^{n-k}(-1)^{j-1}q^{-r-m(k+j-1)}(q^{-r-m+m}[m+r]_{q})\\
 &(q^{-r-m(2)+m}{[m(2)+r]_{q}})\cdots(q^{-r-m(k+j-1)+m}{[m(k+j-1)+r]_{q}})[m(k+j)+r]_{q}W_{m,r}[n,k+j]_{q}\\
 &=\sum_{j=0}^{n-k}(-1)^{j}\frac{{r}_{{k+j+1},q}}{{r}_{{k+1},q}}W_{m,r}[n,k+j]_{q}+\sum_{j=0}^{n-k}(-1)^{j-1}\frac{{r}_{{k+j+1},q}}{{r}_{{k+1},q}}W_{m,r}[n,k+j]_{q}\\
 &=W_{m,r}[n,k]_{q}
\end{align*}
This proves the theorem.
\end{proof}

\begin{thm}
A horizontal generating function of $W_m,r[n,k]_{q}$ is given by
\begin{equation}
\sum_{k=0}^{n}W_{m,r}[n,k]_{q}[t-r|m]_{k,q}=[t]_q^{n}.\label{o}
\end{equation}
\end{thm}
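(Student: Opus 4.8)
The plan is to argue by induction on $n$, using the defining recurrence \eqref{j} to pass from $n$ to $n+1$. The engine of the argument is the $q$-subtraction convention $[t-k]_q=\frac{1}{q^k}([t]_q-[k]_q)$ stated after Definition~1: it makes the generalized factorial $[t-r|m]_{k,q}=\prod_{i=0}^{k-1}[t-r-mi]_q$ carry exactly the $q$-powers needed to cancel against the prefactor $q^{m(k-1)+r}$ occurring in \eqref{j}.

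First I would dispose of the base case $n=0$, where the left-hand side of \eqref{o} is $W_{m,r}[0,0]_q\,[t-r|m]_{0,q}=1=[t]_q^0$ by the empty-product convention and the initial value $W_{m,r}[0,0]_q=1$. For the inductive step, assuming \eqref{o} for $n$, I would substitute \eqref{j} for $W_{m,r}[n+1,k]_q$ into $\sum_{k=0}^{n+1}W_{m,r}[n+1,k]_q[t-r|m]_{k,q}$ and split it into the two sums
\begin{equation*}
\sum_{k}q^{m(k-1)+r}W_{m,r}[n,k-1]_q[t-r|m]_{k,q}+\sum_{k}[mk+r]_qW_{m,r}[n,k]_q[t-r|m]_{k,q}.
\end{equation*}

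Next I would reindex the first sum by $j=k-1$ and peel off the last factor of the generalized factorial, writing $[t-r|m]_{j+1,q}=[t-r|m]_{j,q}\,[t-r-mj]_q$ and then expanding $[t-r-mj]_q=q^{-(r+mj)}\big([t]_q-[mj+r]_q\big)$ via the subtraction convention. The decisive cancellation is $q^{mj+r}\cdot q^{-(r+mj)}=1$, so the first sum collapses to $\sum_j W_{m,r}[n,j]_q[t-r|m]_{j,q}\big([t]_q-[mj+r]_q\big)$. Recombining this with the second sum (both now running over the same index once the out-of-range terms $W_{m,r}[n,-1]_q$ and $W_{m,r}[n,n+1]_q$ are discarded via the boundary conditions) yields $\sum_k W_{m,r}[n,k]_q[t-r|m]_{k,q}\big(([t]_q-[mk+r]_q)+[mk+r]_q\big)$, in which the bracket simplifies to $[t]_q$. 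Factoring out $[t]_q$ and applying the induction hypothesis gives $[t]_q\cdot[t]_q^n=[t]_q^{n+1}$, completing the induction.

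The only delicate point, and the step I expect to require the most care, is the bookkeeping of $q$-powers in the reindexed first sum: one must verify that the convention-induced factor $q^{-(r+mj)}$ coming from $[t-r-mj]_q$ precisely annihilates the prefactor $q^{mj+r}$ inherited from \eqref{j}. This exact cancellation is in fact the reason the weight $q^{m(k-1)+r}$ was built into the defining recurrence, and once it is observed the two sums telescope in the same manner as in the classical ($q\to1$) identity $\sum_k W_{m,r}(n,k)(t-r|m)_k=t^n$.
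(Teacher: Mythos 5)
Your proposal is correct and follows essentially the same route as the paper's own proof: induction on $n$, substitution of the recurrence \eqref{j}, reindexing the first sum, peeling off $[t-r-mk]_q=q^{-(mk+r)}([t]_q-[mk+r]_q)$ to cancel the prefactor $q^{mk+r}$, and recombining so the bracket collapses to $[t]_q$. The only difference is cosmetic (you make the discarding of the out-of-range boundary terms explicit, which the paper does silently when shifting the summation limits).
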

\begin{proof}
We will prove this by induction on $n$. It is easy to verify that $\eqref{o}$ holds when $n=0$.
$$W_{m,r}[0,0][t-r|m]_{0,q}=1=[t]_{q}^0$$\\
Now, suppose that it is true for some $n\geq0$. That is,
\begin{equation}
\sum_{k=0}^{n}W_{m,r}[n,k]_{q}[t-r|m]_{k,q}=[t]_q^{n}.
\end{equation}
Then
\small\begin{align*}
&\sum_{k=0}^{n+1}W_{m,r}[n+1,k]_{q}[t-r|m]_{k,q}\\
&\;\;\;\;\;\;\;\;\;\;=\sum_{k=0}^{n+1}\left\{\left\{q^{m(k-1)+r} W_{m,r}[n,k-1]_{q}+[mk+r]_{q}W_{m,r}[n,k]_{q}\right\}[t-r|m]_{k,q}
\right\}\\
&\;\;\;\;\;\;\;\;\;\;=\sum_{k=0}^{n+1}q^{m(k-1)+r} W_{m,r}[n,k-1]_{q}[t-r|m]_{k,q}+\sum_{k=0}^{n+1}[mk+r]_{q}W_{m,r}[n,k]_{q}[t-r|m]_{k,q}\\
&\;\;\;\;\;\;\;\;\;\;=\sum_{k=1}^{n}q^{m(k)+r} W_{m,r}[n,k]_{q}[t-r|m]_{k+1,q}+\sum_{k=0}^{n}[mk+r]_{q}W_{m,r}[n,k]_{q}[t-r|m]_{k,q}\\
&\;\;\;\;\;\;\;\;\;\;=\sum_{k=0}^{n}q^{m(k)+r} W_{m,r}[n,k]_{q}[t-r|m]_{k,q}[t-r-km]_{q}+\sum_{k=0}^{n}[mk+r]_{q}W_{m,r}[n,k][t-r|m]_{k,q}.\\
\end{align*}
Since $[t-k]_{q}=\frac{1}{q^{k}}([t]_{q}-[k]_{q})$, letting $k=r+km$, we have,
\begin{equation*}
[t-r-km]_{q}=q^{-(km+r)}([t]_{q}-[km+r]_{q}).
\end{equation*}
Finally,
\begin{align*}
&\sum_{k=0}^{n+1}W_{m,r}[n+1,k]_{q}[t-r|m]_{k,q}\\
&\;\;\;\;\;\;\;\;\;\;=\sum_{k=0}^{n}q^{m(k)+r} W_{m,r}[n,k]_{q}[t-r|m]_{k,q}q^{-(m(k)+r)}([t]_{q}-[km+r]_{q})\\
&\;\;\;\;\;\;\;\;\;\;+\sum_{k=0}^{n}[mk+r]_{q}W_{m,r}[n,k]_{q}[t-r|m]_{k,q}\\
&\;\;\;\;\;\;\;\;\;\;=\sum_{k=0}^{n}[t]_{q}W_{m,r}[n,k]_{q}[t-r|m]_{k,q}\\
&\;\;\;\;\;\;\;\;\;\;=[t]_{q}\sum_{k=0}^{n}W_{m,r}[n,k]_{q}[t-r|m]_{k,q}=[t]_{q}[t]_{q}^{n}=[t]_{q}^{n+1}
\end{align*}
\rm This proves the theorem.
\end{proof}

\section{Explicit Formula and Generating Function}	
Explicit formulas and generating functions of a given sequence of numbers or polynomials are useful tools in giving combinatorial interpretation of the numbers or polynomials. In the subsequent theorems, we establish the exponential and rational generating functions and explicit formulas for $W_{m,r}[n,k]_{q}$.\\

A $q$-analogue of the difference operator, denoted by $\Delta_{q,h}^{n}$, also known as $q$-difference operator of order $n$, was defined by the rule
\begin{equation}
\Delta_{q,h}^{n} f(x) = \left[\prod_{j=0}^{n-1} (E_h-q^{j})\right] f(x),\quad n\geq 1,
\end{equation}
where $E_h$ is the shift operator defined by $E_{h} f(x) = f(x+h)$. When $h =1$, we use the notation
\begin{equation}
\Delta_{q,h}^{n} = \Delta_{q}^{n}
\end{equation}
This operator was thoroughly discussed in [9][19]. By convention, it is defined that $\Delta_{q,h}^{0} = 1 $ (identity map). The following is the explicit formula for the $q$-difference operator
\begin{equation}
\Delta_{q,h}^{k} f(x)= \sum_{j=0}^{k}(-1)^{k-j}q^{\binom{k-j}{2}}{\bracketed{k}{j}}_{q}f(x+jh).\label{do}
\end{equation}

The new $q$-analogue of Newton's Interpolation Formula in [19] states that, for
\begin{equation*}
f_{q}(x) = a_{0}+a_{1}[x-x_{0}]_{q}+\cdots+a_{k}[x-x_0]_{q}[x-x_{1}]_{q}\cdots[x-x_{k-1}]_{q},
\end{equation*}
we have
\begin{align*}
f_{q}(x)&=f_{q}(x_0)+\frac{\Delta_{{q^h},h}f_{q}(x_0)[x-x_{0}]_{q}}{[1]_{q^h}![h]_{q}}+\frac{\Delta_{{q^h},h}^{2}f_{q}(x_0)[x-x_{0}]_{q}[x-x_{1}]_{q}}{[2]_{q^h}![h]_{q}^{2}}\\
&+\cdots+\frac{\Delta_{{q^h},h}^{k}f_{q}(x_0)[x-x_{0}]_{q}[x-x_{1}]_{q}\cdots[x-x_{k-1}]_{q}}{[k]_{q^h}![h]_{q}^{k}}
\end{align*}
where $x_{k}=x_{0}+kh$, $k=1,2,\cdots$ such that when $x_{0}=0$ and $h=m$ this can be simplified as \\
\begin{align*}
f_{q}(x)&=f_{q}(0)+\frac{\Delta_{{q^m},m}f_{q}(0)[x]_{q}}{[1]_{q^m}![m]_{q}}+\frac{\Delta_{{q^m},m}^{2}f_{q}(0)[x]_{q}[x-m]_{q}}{[2]_{q^m}![m]_{q}^{2}}\\
&+\cdots+\frac{\Delta_{{q^m},m}^{k}f_{q}(0)[x]_{q}[x-m]_{q}\cdots[x-m(k-1)]_{q}}{[m]_{q^m}![m]_{q}^{k}}.
\end{align*}
\rm Using $\eqref{o}$ with $t=x$, we get 
\begin{equation}
\sum_{k=0}^{n} W_m,r[n,k]_{q}[x-r|m]_{k,q} = [x]_{q}^{n}
\end{equation}
\rm which can be expressed further as
\begin{equation}
\sum_{k=0}^{n}W_{m,r}[n,k][x]_{q}[x-m]_{m}[x-2m]_{q}\cdots[x-(k-1)m]=[x+r]_{q}^{n}
\end{equation}
 
Let $f_{q}(x)=[x+r]_{q}^{n}$ and $W_{m,r}[n,k]_{q} = \frac{\Delta_{q^{m},m}^{k} f(_{q}(0)}{[k]_{q^m}![m]_{q}^{k}}$. By proper application of the above Newton's Interpolation Formula and the identity in \eqref{do}, we get
\begin{align*}
\Delta_{q^{m},m}^{f} f_{q}(x)&=\sum_{j=0}^{k}(-1)^{k-j}q^{m\binom{k-j}{2}}{\bracketed{k}{j}}_{q^m}f_{q}(x+jm)\\
&=\sum_{j=0}^{k}(-1)^{k-j}q^{m\binom{k-j}{2}}{k\brack j}_{q^m}[x-r+jm]_{q}^{n}
\end{align*}
Evaluating at $x=0$ yields
\begin{equation*}
\Delta_{q^{m},m}^{f} f_{q}(0)=\sum_{j=0}^{k}(-1)^{k-j}q^{m\binom{k-j}{2}}{\bracketed{k}{j}}_{q^m}[r+jm]_{q}^{n},
\end{equation*}
which gives the following explicit formula for $W_{m,r}[n,k]_{q}$.
\begin{thm}\label{EFqWN}
	The explicit formula for $W_{m,r}[n,k]_{q}$ is given by \\
	\begin{equation}\label{exp-1}
	W_{m,r}[n,k]_{q}=\frac{1}{[k]_{q^m}![m]_{q}^{k}}\sum_{j=0}^{k} (-1)^{k-j} q^{m\binom{k-j}{2}}{\bracketed{k}{j}}_{q^m}[jm+r]_{q}^{n}.
	\end{equation}
\end{thm}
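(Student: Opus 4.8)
The plan is to recover \eqref{exp-1} as the uniquely determined coefficient in a Newton-type interpolation expansion of the polynomial $f_q(x) = [x+r]_q^n$. The natural starting point is the horizontal generating function \eqref{o}. First I would set $t = x$ in \eqref{o} and expand the $q$-factorial $[x-r|m]_{k,q}$; after the substitution $x \mapsto x+r$ the identity becomes
\[
\sum_{k=0}^{n} W_{m,r}[n,k]_q\,[x]_q[x-m]_q\cdots[x-(k-1)m]_q = [x+r]_q^n.
\]
This exhibits $f_q(x) = [x+r]_q^n$ as a linear combination of the interpolation basis $\{[x]_q[x-m]_q\cdots[x-(j-1)m]_q\}_{j\ge 0}$ with the numbers $W_{m,r}[n,k]_q$ as coefficients.

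Next I would invoke the $q$-analogue of Newton's Interpolation Formula quoted above, specialized to $x_0 = 0$ and $h = m$. Since $f_q$ is a polynomial of degree $n$ in the variable $[x]_q$, its expansion in the basis above is unique; comparing that expansion with the one just obtained from \eqref{o} forces the identification
\[
W_{m,r}[n,k]_q = \frac{\Delta_{q^m,m}^k f_q(0)}{[k]_{q^m}!\,[m]_q^k}.
\]
The crux of the argument is precisely this uniqueness-of-coefficients step, together with the bookkeeping needed to confirm that the operator occurring in Newton's formula is exactly $\Delta_{q^m,m}$ (step $m$, base $q^m$) and that the normalizing denominator is $[k]_{q^m}!\,[m]_q^k$. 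This is the one place where one must be careful not to conflate the base $q$ with $q^m$, and where the interplay between the step size $m$ and the $q$-shifted factorial has to be tracked exactly.

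Finally I would compute $\Delta_{q^m,m}^k f_q(0)$ explicitly. Applying the expansion \eqref{do} for the $q$-difference operator, with $q$ replaced by $q^m$ and $h = m$, to $f_q(x) = [x+r]_q^n$ gives
\[
\Delta_{q^m,m}^k f_q(x) = \sum_{j=0}^{k}(-1)^{k-j} q^{m\binom{k-j}{2}} \begin{bmatrix}k\\j\end{bmatrix}_{q^m} [x+r+jm]_q^n,
\]
and evaluation at $x = 0$ yields the sum $\sum_{j=0}^{k}(-1)^{k-j} q^{m\binom{k-j}{2}} {\bracketed{k}{j}}_{q^m}[jm+r]_q^n$. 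Substituting this into the boxed coefficient formula produces \eqref{exp-1} at once. I expect no genuine difficulty beyond the uniqueness argument; once the coefficient identity is secured, the remaining work is the routine substitution and simplification carried out in the displayed computation above.
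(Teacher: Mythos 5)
Your proposal follows essentially the same route as the paper: setting $t=x$ in \eqref{o}, rewriting the horizontal generating function as an expansion of $f_q(x)=[x+r]_q^n$ in the basis $[x]_q[x-m]_q\cdots[x-(k-1)m]_q$, identifying the coefficients via the $q$-Newton interpolation formula with $x_0=0$ and $h=m$ as $\Delta_{q^m,m}^k f_q(0)/([k]_{q^m}!\,[m]_q^k)$, and then expanding the $q$-difference operator via \eqref{do} (with base $q^m$ and step $m$) and evaluating at $x=0$. Your explicit attention to the uniqueness-of-coefficients step and to keeping the base $q^m$ distinct from $q$ only makes precise what the paper compresses into the phrase ``by proper application of the above Newton's Interpolation Formula,'' and your intermediate expression $[x+r+jm]_q^n$ is in fact the correct form of what the paper misprints as $[x-r+jm]_q^n$.
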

\begin{rema}\rm
	The above theorem reduces to
	\begin{equation*}
	W_{m,r}(n,k)=\frac{1}{k!{m}^{k}}\sum_{j=0}^{k} (-1)^{k-j}{\binom{k}{j}}(jm+r)^{n}
	\end{equation*}
	when $q\rightarrow 1$, which is exactly the explicit formula of the $r-$Whitney numbers of the second kind.
\end{rema}
\begin{rema}\rm
For brevity, $\eqref{exp-1}$ can be expressed as 
\begin{equation}
W_{m,r}[n,k]_{q}=\frac{1}{[k]_{q^m}![m]_{q}^{k}}\left[\Delta_{q^{m},m}^{k}[x+r]_{q}^{n}\right]_{x=0}
\end{equation}
\end{rema}

\begin{thm}
	For nonnegative integers $n$ and $k$, the $q$-analogue $W_{m,r}[n,k]_{q}$ has a generating function

	\begin{equation}
	\sum_{n\geq 0}W_{m,r}[n,k]_{q}\frac{[t]_{q}^{n}}{[n]_{q}!}
	=\frac{1}{[k]_{q}m![m]_{q}^{k}}\big[\Delta_{q^{m},m^k}e_q\big([x+jm+r]_{q}[t]_{q}\big)\big]_{x=0}.
	\end{equation}
\end{thm}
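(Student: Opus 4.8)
The plan is to obtain this generating function directly from the explicit formula \eqref{exp-1} in Theorem \ref{EFqWN} rather than from the defining recurrence \eqref{j}. First I would take the explicit formula
\[
W_{m,r}[n,k]_{q}=\frac{1}{[k]_{q^m}![m]_{q}^{k}}\sum_{j=0}^{k} (-1)^{k-j} q^{m\binom{k-j}{2}}{\bracketed{k}{j}}_{q^m}[jm+r]_{q}^{n},
\]
multiply both sides by $[t]_{q}^{n}/[n]_{q}!$, and sum over $n\ge 0$. Since the sum over $j$ runs over the fixed finite range $0\le j\le k$, interchanging it with the sum over $n$ is legitimate (as formal power series in $[t]_q$, each power $[t]_q^n$ already receives only finitely many contributions), and this interchange confines all $n$-dependence to the factor $[jm+r]_{q}^{n}[t]_{q}^{n}=\big([jm+r]_{q}[t]_{q}\big)^{n}$.

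The next step is to recognize the resulting inner series. For each fixed $j$ we get $\sum_{n\ge 0}\big([jm+r]_{q}[t]_{q}\big)^{n}/[n]_{q}!=e_q\big([jm+r]_{q}[t]_{q}\big)$, the $q$-exponential evaluated at $[jm+r]_{q}[t]_{q}$. This produces the intermediate identity
\[
\sum_{n\ge 0}W_{m,r}[n,k]_{q}\frac{[t]_{q}^{n}}{[n]_{q}!}=\frac{1}{[k]_{q^m}![m]_{q}^{k}}\sum_{j=0}^{k}(-1)^{k-j}q^{m\binom{k-j}{2}}{\bracketed{k}{j}}_{q^m}\,e_q\big([jm+r]_{q}[t]_{q}\big).
\]

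To recast the $j$-sum in operator form I would set $f(x)=e_q\big([x+r]_{q}[t]_{q}\big)$ and invoke the explicit $q$-difference formula \eqref{do} with base $q^m$ and step $h=m$, namely
\[
\Delta_{q^m,m}^{k} f(x)=\sum_{j=0}^{k}(-1)^{k-j}q^{m\binom{k-j}{2}}{\bracketed{k}{j}}_{q^m}f(x+jm).
\]
Because $f(0+jm)=e_q\big([jm+r]_{q}[t]_{q}\big)$, evaluating at $x=0$ reproduces exactly the $j$-sum above, so that the closed form follows. This is the same mechanism already used to pass from \eqref{do} to \eqref{exp-1}, now applied with $e_q\big([x+r]_q[t]_q\big)$ in place of $[x+r]_q^{\,n}$.

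The work here is essentially bookkeeping rather than analysis, and that is also where the only real obstacle lies: one must keep the exponent $q^{m\binom{k-j}{2}}$ and the base $q^m$ of ${\bracketed{k}{j}}_{q^m}$ aligned with the instance of \eqref{do} that was used, and confirm that replacing $[jm+r]_q^{\,n}$ by its generating function commutes with $\Delta_{q^m,m}^{k}$, which it does since that operator is linear and acts only through the shift $x\mapsto x+jm$. I would also record the cosmetic corrections to the displayed statement: the normalizing constant should read $[k]_{q^m}![m]_{q}^{k}$, and the right-hand side should be $\big[\Delta_{q^m,m}^{k}\,e_q\big([x+r]_{q}[t]_{q}\big)\big]_{x=0}$, with the quantity $jm$ appearing inside the operator through the shift rather than written explicitly in the argument $[x+jm+r]_q$.
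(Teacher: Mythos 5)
Your proposal follows essentially the same route as the paper's own proof: substitute the explicit formula of Theorem \ref{EFqWN} into the series, interchange the finite $j$-sum with the sum over $n$, recognize the $q$-exponential $e_q\big([jm+r]_q[t]_q\big)$, and repackage the $j$-sum via the $q$-difference operator formula \eqref{do} evaluated at $x=0$. Your observations about the typographical slips in the displayed statement (the normalizer should be $[k]_{q^m}![m]_q^k$ and the operator should read $\Delta_{q^m,m}^k$ acting on $e_q([x+r]_q[t]_q)$) are correct and do not change the substance of the argument.
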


\begin{proof}
	Using the formula in Theorem \ref{EFqWN}, we obtain
\begin{align*}
	\sum_{n\geq 0}W_{m,r}[n,k]_{q}\frac{[t]_{q}^{n}}{[n]_{q}!}&=\sum_{n\geq0}\frac{1}{[k]_{q}m![m]_{q}^{k}}\sum_{j=0}^{k} (-1)^{k-j} q^{m\binom{k-j}{2}}{\bracketed{k}{j}}_{q^m}[jm+r]_{q}^{n}\frac{[t]_{q}^{n}}{[n]_{q}!}\\
	&=\sum_{j=0}^{k}\frac{1}{[k]_{q}m![m]_{q}^{k}}(-1)^{k-j}q^{m\binom{k-j}{2}}{\bracketed{k}{j}}_{q^m}\sum_{n\geq 0}\frac{\big([jm+r]_{q}[t]_{q}\big)^{n}}{[n]_{q}!}\\
	&=\frac{1}{[k]_{q}m![m]_{q}^{k}}\sum_{j=0}^{k}(-1)^{k-j} q^{m\binom{k-j}{2}}{\bracketed{k}{j}}_{q^m}e_{q}\big([jm+r]_{q}[t]_{q}\big)\\
	&=\frac{1}{[k]_{q}m![m]_{q}^{k}}\big[\Delta_{q^{m},m^k}e_q\big([x+jm+r]_{q}[t]_{q}\big)\big]_{x=0}.
\end{align*}
\end{proof}
\begin{rema}\rm
	When $q\rightarrow 1$, the above theorem becomes
	\begin{align*}
	\sum_{n\geq 0}W_{m,r}(n,k)_{q}\frac{t^{n}}{n!}&=\frac{1}{k!(m)^{k}}\sum_{j=0}^{k} (-1)^{k-j}{\binom{k}{j}} e^{(jm+r)t}\\
	&=\frac{e^{rt}}{k!(m)^{k}}\sum_{j=0}^{k}(-1)^{k-j}{\binom{k}{j}}\left(e^{mt}\right)^{j}\\
	&=\frac{e^{rt}}{k!(m)^{k}}\left(e^{mt}-1\right)^{k},
	\end{align*}
which is the exponential generating function of the $r$-Whitney numbers of the second kind.
\end{rema}

 \begin{thm} \label{rgf}
 For nonnegative integers $n$ and $k$, the $q$-analogue $W_m,r[n,k]_{q}$ satisfies the rational generating function 
\begin{equation}
\Psi_{k}(t)=\sum_{n\geq k}W_{m,r}[n,k]_{q}[t]_{q}^{n}=\frac{q^{{{m}\binom{k}{2}}+kr}[t]_{q}^{k}}{\prod_{j=0}^{k}(1-[mj+r]_{q}[t]_{q})}.
\end{equation}
\end{thm}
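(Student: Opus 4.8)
The plan is to treat $[t]_{q}$ as a single formal variable, say $x=[t]_{q}$, and to convert the triangular recurrence \eqref{j} into a first-order recurrence in $k$ for the generating functions $\Psi_{k}(t)=\sum_{n\ge k}W_{m,r}[n,k]_{q}x^{n}$. First I would multiply \eqref{j} by $x^{n}$ and sum over $n\ge k$, splitting the right-hand side into its two pieces. In the term coming from $q^{m(k-1)+r}W_{m,r}[n-1,k-1]_{q}$, the summand vanishes unless $n-1\ge k-1$, so after factoring out one power of $x$ and reindexing it contributes exactly $q^{m(k-1)+r}\,x\,\Psi_{k-1}(t)$; in the term coming from $[mk+r]_{q}W_{m,r}[n-1,k]_{q}$, the summand vanishes unless $n-1\ge k$, so the $n=k$ term drops out (since $W_{m,r}[k-1,k]_{q}=0$) and, after shifting the index, it contributes $[mk+r]_{q}\,x\,\Psi_{k}(t)$. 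The boundary bookkeeping in these two shifts is the only place where care is required, and it is routine once one uses the convention $W_{m,r}[n,k]_{q}=0$ for $n<k$.

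This yields the identity $\Psi_{k}(t)=q^{m(k-1)+r}x\,\Psi_{k-1}(t)+[mk+r]_{q}x\,\Psi_{k}(t)$, which I would solve for $\Psi_{k}$ to obtain the clean recurrence
\begin{equation*}
\Psi_{k}(t)=\frac{q^{m(k-1)+r}[t]_{q}}{1-[mk+r]_{q}[t]_{q}}\,\Psi_{k-1}(t).
\end{equation*}
The base case is supplied by Remark~2.3: since $W_{m,r}[n,0]_{q}=[r]_{q}^{n}$, the function $\Psi_{0}(t)=\sum_{n\ge 0}[r]_{q}^{n}[t]_{q}^{n}$ is the geometric series $\dfrac{1}{1-[r]_{q}[t]_{q}}$, which already matches the claimed formula at $k=0$ (the empty product and $\binom{0}{2}=0$).

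Finally I would iterate the first-order recurrence from $\Psi_{0}$ up to $\Psi_{k}$, so that
\begin{equation*}
\Psi_{k}(t)=\left(\prod_{i=1}^{k}\frac{q^{m(i-1)+r}[t]_{q}}{1-[mi+r]_{q}[t]_{q}}\right)\Psi_{0}(t).
\end{equation*}
The numerator accumulates a factor $[t]_{q}^{k}$ together with the $q$-power $q^{\sum_{i=1}^{k}(m(i-1)+r)}=q^{m\binom{k}{2}+kr}$, using $\sum_{i=1}^{k}(i-1)=\binom{k}{2}$; and the denominator $\prod_{i=1}^{k}\bigl(1-[mi+r]_{q}[t]_{q}\bigr)$, once multiplied by the denominator $1-[r]_{q}[t]_{q}$ of $\Psi_{0}$, telescopes into $\prod_{j=0}^{k}\bigl(1-[mj+r]_{q}[t]_{q}\bigr)$. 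Substituting these back gives exactly the stated expression. The argument is essentially self-contained; the main thing to watch is the index-shift/boundary handling in the first step and the correct evaluation of the exponent $m\binom{k}{2}+kr$, neither of which presents a genuine obstacle.
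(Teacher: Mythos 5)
Your proposal is correct and follows essentially the same route as the paper: multiply the triangular recurrence \eqref{j} by $[t]_q^n$, sum over $n\ge k$, shift indices to obtain $\Psi_k(t)=\dfrac{q^{m(k-1)+r}[t]_q}{1-[mk+r]_q[t]_q}\Psi_{k-1}(t)$, and iterate down to the geometric-series base case $\Psi_0(t)=\dfrac{1}{1-[r]_q[t]_q}$. Your treatment is in fact slightly more careful than the paper's about the boundary terms in the index shifts and about how the exponent $m\binom{k}{2}+kr$ accumulates.
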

\begin{proof} It can easily be verified that, when $k=0$, we obtain
	\begin{equation*}
	\Psi_{0}(t)=\sum_{n\geq 0}W_{m,r}[n,0]_{q}[t]_{q}^{n}
	=\sum_{n\geq 0}[r]_{q}^{n}[t]_{q}^{n}
	=\sum_{n\geq 0}\left([r]_{q}[t]_{q}\right)^{n}
	=\frac{1}{1-[r]_q[t]_{q}}.
	\end{equation*}
Now, for $k\geq 0$ and using the recurrence relation in $\eqref{j}$, we obtain

\begin{align*}
\Psi_{k}(t)&=\sum_{n\geq k}W_{m,r}[n,k]_{q}[t]_{q}^{n}=\sum_{n\geq k}\left({q^{m(k-1)+r}} W_{m,r}[n-1,k-1]_{q}\right.\\
&\;\;\;\left.+[mk+r]_{q}W_{m,r}[n-1,k]_{q}\right)[t]_{q}^{n}\\
&=\sum_{n\geq k}{q^{m(k-1)+r}} W_{m,r}[n-1,k-1]_{q}[t]_{q}^{n}+\sum_{n\geq k}[mk+r]_{q}W_{m,r}[n-1,k]_{q}[t]_{q}^{n}\\
&={q^{m(k-1)+r}}\sum_{n\geq k}W_{m,r}[n-1,k-1]_{q}[t]_{q}^{n}+[mk+r]_{q}\sum_{n\geq k}W_{m,r}[n-1,k]_{q}[t]_{q}^{n}\\
&={q^{m(k-1)+r}}[t]_{q}\sum_{n\geq k}W_{m,r}[n-1,k-1]_{q}[t]_{q}^{n-1}+[mk+r]_{q}[t]_{q}\sum_{n\geq k}W_{m,r}[n-1,k]_{q}[t]_{q}^{n-1}.
\end{align*}
This can be expressed as 
\begin{align*}
\Psi_{k}(t)&={q^{m(k-1)+r}}[t]_{q}\Psi_{k-1}(t)+[mk+r]_{q}[t]_{q}\Psi_{k}(t)\\
\Psi_{k}(t)-[mk+r]_{q}[t]_{q}\Psi_{k}(t)&={q^{m(k-1)+r}}[t]_{q}\Psi_{k-1}(t)\\
\Psi_{k}(t)(1-[mk+r]_{q}[t]_{q})&={q^{m(k-1)+r}}[t]_{q}\Psi_{k-1}(t).
\end{align*}
This gives
$$\Psi_{k}(t)=\frac{q^{m(k-1)+r}[t]_{q}}{1-[mk+r]_{q}[t]_{q}}\Psi_{k-1}(t).$$
By backward substitution, we get
\begin{align*}
\Psi_{k}(t)&=\frac{q^{m(k-1)+r}[t]_{q}}{1-[mk+r]_{q}[t]_{q}}\cdot \frac{q^{m(k-2)+r}[t]_{q}}{1-[m(k-1)+r]_{q}[t]_{q}}\Psi_{k-2}(t)\\	
&=\frac{q^{m(k-1)+r}[t]_{q}}{1-[mk+r]_{q}[t]_{q}}\cdot \frac{q^{m(k-2)+r}[t]_{q}}{1-[m(k-1)+r]_{q}[t]_{q}}
\cdot \frac{q^{m(k-3)+r}[t]_{q}}{1-[mk-2+r]_{q}[t]_{q}}\Psi_{k-3}(t)\\
				\vdots\\
\Psi_{k}(t)&=\frac{q^{m{\binom{k}{2}}+kr}[t]_{q}^{k}}{\prod_{j=0}^{k}\big(1-[mj+r]_{q}[t]_{q}\big)}.
\end{align*}
\end{proof}

\smallskip
As a consequence of Theorem \ref{rgf}, we have the following explicit formula in symmetric function form.

\begin{thm}
 For nonnegative integers $n$ and $k$, the explicit formula for $W_{m,r}[n,k]_{q}$ in the homogeneous symmetric function form is given by
 \begin{equation}\label{symm}
 W_{m,r}[n,k]_{q}=\sum_{0 \leq j_1\leq j_2\leq \cdots \leq j_{n-k}\leq k}q^{m{\binom{k}{2}}+kr}\;\;{\prod_{i=1}^{n-k}[mj_i+r]_{q}}.
\end{equation}
\end{thm}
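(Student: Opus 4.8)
The plan is to extract the coefficient of $[t]_q^n$ in the rational generating function of Theorem \ref{rgf} by expanding each denominator factor into a geometric series. Setting $x_j := [mj+r]_q$ for $0 \le j \le k$, I would first write
$$\frac{1}{\prod_{j=0}^{k}(1-x_j[t]_q)} = \prod_{j=0}^{k}\sum_{a_j \ge 0}\bigl(x_j[t]_q\bigr)^{a_j} = \sum_{a_0,\dots,a_k \ge 0}\Bigl(\prod_{j=0}^k x_j^{a_j}\Bigr)[t]_q^{a_0+\cdots+a_k},$$
treating $\Psi_k(t)$ as a formal power series in $[t]_q$, exactly as in the $k=0$ computation of the proof of Theorem \ref{rgf}. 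Collecting terms by total degree $N = a_0+\cdots+a_k$, the coefficient of $[t]_q^N$ is the complete homogeneous symmetric polynomial $h_N(x_0,\dots,x_k)$ in the $k+1$ variables $x_0,\dots,x_k$.

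Next I would incorporate the prefactor $q^{m\binom{k}{2}+kr}[t]_q^k$ of $\Psi_k(t)$: multiplying by $[t]_q^k$ merely shifts the degree, so the coefficient of $[t]_q^n$ in $\Psi_k(t)$ equals $q^{m\binom{k}{2}+kr}$ times $h_{n-k}(x_0,\dots,x_k)$. Comparing with $\Psi_k(t) = \sum_{n\ge k} W_{m,r}[n,k]_q[t]_q^n$ then yields $W_{m,r}[n,k]_q = q^{m\binom{k}{2}+kr}\,h_{n-k}(x_0,\dots,x_k)$.

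The remaining step, which is the only place any real bookkeeping is needed, is to rewrite $h_{n-k}$ in the weakly increasing index form appearing in \eqref{symm}. Here I would use the standard fact that a monomial $\prod_j x_j^{a_j}$ of degree $n-k$ is determined by the multiset of indices it uses, and that multisets of size $n-k$ drawn from $\{0,1,\dots,k\}$ are in bijection with weakly increasing sequences $0 \le j_1 \le j_2 \le \cdots \le j_{n-k} \le k$ via $a_j = \#\{i : j_i = j\}$. Under this bijection $\prod_{j=0}^k x_j^{a_j} = \prod_{i=1}^{n-k} x_{j_i} = \prod_{i=1}^{n-k}[mj_i+r]_q$, so that
$$h_{n-k}(x_0,\dots,x_k) = \sum_{0\le j_1\le\cdots\le j_{n-k}\le k}\prod_{i=1}^{n-k}[mj_i+r]_q,$$
which gives \eqref{symm} after restoring the common factor $q^{m\binom{k}{2}+kr}$. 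The main (and rather mild) obstacle is precisely this identification of $h_{n-k}$ with the weakly ordered multi-index sum; every other step is a direct geometric-series expansion of the result of Theorem \ref{rgf}.
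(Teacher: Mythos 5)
Your proposal is correct and follows essentially the same route as the paper: the paper likewise expands each factor of the denominator in Theorem \ref{rgf} as a geometric series, reads off the coefficient of $[t]_q^{n}$ as $q^{m\binom{k}{2}+kr}$ times a sum over exponent vectors $S_0+\cdots+S_k=n-k$ of $\prod_j [mj+r]_q^{S_j}$, and then identifies this with the weakly increasing multi-index form. Your write-up is in fact slightly more careful, since you make explicit the multiset bijection that the paper passes over with the phrase ``which is equivalent to the desired explicit formula.''
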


\begin{proof} The rational generating function in Theorem \ref{rgf} can be expressed as
\begin{align*}
\sum_{n\geq k}W_{m,r}[n,k]_{q}[t]_{q}^{n}&=q^{m{\binom{k}{2}}+kr}[t]_{q}^{k}\;\;\prod_{j=0}^{k}\frac{1}{\left(1-[mj+r]_{q}[t]_{q}\right)}\\
&=q^{m{\binom{k}{2}}+kr}[t]_{q}^{k}\prod_{j=0}^{k}\sum_{n\geq 0}[mj+r]_{q}^{n}[t]_{q}^{n}\\
&=q^{m{\binom{k}{2}}+kr}[t]_{q}^{k}\cdot\sum_{n\geq k} {\sum_{S_1+S_2+\cdots S_k=n-k}}\;\;{\prod_{j=0}^{k}[mj+r]_{q}^{S_j}[t]_{q}^{S_j}}\\
&=q^{m{\binom{k}{2}}+kr}[t]_{q}^{k}\sum_{n\geq k}{{\sum_{S_1+S_2+\cdots S_k=n-k}}\;\;\prod_{j=0}^{k}[mj+r]_{q}^{S_j}}[t]_{q}^{S_j}\\
&=q^{m{\binom{k}{2}}+kr}[t]_{q}^{k}\sum_{n\geq k} {\sum_{S_1+S_2+\cdots S_k=n-k}}\;\;\prod_{j=0}^{k}[mj+r]_{q}^{S_j}\}[t]_{q}^{n-k}\\
&=q^{m{\binom{k}{2}}+kr}\sum_{n\geq k}{\sum_{S_1+S_2+\cdots S_k=n-k}}\;\;\prod_{j=0}^{k}[mj+r]_{q}^{S_j}[t]_{q}^{n}.
\end{align*}
Thus, by comparing the coefficients of $[t]_{q}^{n}$, we obtain
\begin{equation*}
W_{m,r}[n,k]_{q}=q^{m{\binom{k}{2}}+kr}{\sum_{S_1+S_2+\cdots S_k=n-k}}\;\;{\prod_{j=0}^{k}[mj+r]_{q}^{S_j}},
\end{equation*}
which is equivalent to the desired explicit formula in \eqref{symm}.
\end{proof}

\section{The Hankel Transform}
Now, we define another form of $q$-analogue of $r$-Whitney numbers of the second kind, denoted by $W^*_{m,r}[n,k]_{q}$, as 
$$W^*_{m,r}[n,k]_{q}=q^{-m{\binom{k}{2}}-kr}W_{m,r}[n,k]_{q}.$$
Then, using equation \eqref{symm}, we have
\begin{equation}\label{symm1}
W^*_{m,r}[n,k]_{q}=\sum_{0 \leq j_1\leq j_2\leq \cdots \leq j_{n-k}\leq k}{\prod_{i=1}^{n-k}[mj_i+r]_{q}}.
\end{equation}
The complete symmetric function of degree $n$ in $k$ variables $x_1, x_2, \ldots, x_k$, denoted by $h_n(x_1, x_2, \ldots, x_k)$, is defined by
\begin{equation}\label{compsymm1}
h_n(x_1, x_2, \ldots, x_k)=\sum_{1\leq j_1\leq j_2\leq \cdots \leq j_{n}\leq k}\;\;{\prod_{i=1}^{n}x_{j_i}},
\end{equation}
with initial condition $h_0(x_1, x_2, \ldots, x_k)=1$ and $h_{n-k}(x_1, x_2, \ldots, x_k)=0$ if $n<k$.  

\smallskip
Consider the case where  $x_i=[mi+r]_{q}$ for $i=0, 1, \ldots, k$.  Hence, we can express the other form of $q$-analogue of $r$-Whitney numbers of the second kind as
$$W^*_{m,r}[n+k,k]_{q}=h_n(x_0, x_1, \ldots, x_k).$$
Moreover, it can easily be shown that
\begin{equation}\label{eq1}
W^*_{m,r+mk}[s,t-k]_{q}=h_{s-t+k}(x_k, x_{k+1}, \ldots, x_t).
\end{equation}
Note that
$$h_n(x_k)=\sum_{k\leq j_1\leq j_2\leq \cdots \leq j_{n}\leq k}{\prod_{i=1}^{n}x_{j_i}}=x_k^n.$$
So, when $t=k$, we have
\begin{equation}\label{initial1}
W^*_{m,r+mk}[s,0]_{q}=h_{s}(x_k)=x_k^s=[mk+r]_{q}^s.
\end{equation}

\smallskip
An {\it $A$-tableau} is a list $\phi$ of column $c$ of a Ferrer's diagram of a partition $\lambda$(by decreasing order of length) such that the lengths $|c|$ are part of the sequence $A=(r_i)_{i\ge0}$, a strictly increasing sequence of nonnegative integers. Let $\omega$ be a function from the set of nonnegative integers $N$ to a ring K (column weights according to length). Suppose $\Phi$ is an $A$-tableau with $l$ columns of lengths $|c|\le h$. We use $T_r^A(h,l)$ to denote the set of such $A$-tableaux. Then, we set 
\begin{equation*}
\omega_A(\Phi)=\prod_{c\in\Phi}\omega(|c|).
\end{equation*}
Note that $\Phi$ might contain a finite number of columns whose lengths are zero since $0\in A=\{0,1,2,\ldots,k\}$ and if $\omega(0)\neq0$.

From this point onward, whenever an $A$-tableau is mentioned, it is always associated with the sequence $A=\{0,1,2,\ldots,k\}$.\\

Consider $\omega(|c|)=[m|c|+r]_q$ where $r$ is a complex number, and $|c|$ is the length of column $l$ of an $A$-tableau in $T_r^A(k,n-k)$. Then
\begin{eqnarray*}
W^*_{m,r}[n,k]=\sum_{\phi\in T_r^A(k,n-k)}\prod_{c\in\phi}\omega(|c|).
\end{eqnarray*}

\smallskip
Suppose
\begin{eqnarray*}
&\phi_1&\mbox{is a tableau with}\;k-l\;\mbox{columns whose lengths are in the}\;\mbox{set}\\
&&\{0,1,\ldots,l\},\;\mbox{and}\\
&\phi_2&\mbox{be a tableau with}\;n-k-j\;\mbox{columns whose lengths are in the}\\
&&\;\mbox{set}\;\{l+1,l+2,\ldots,l+j+1\}
\end{eqnarray*}
\noindent Then $$\phi_1\in T^{A_1}(l,k-l) \mbox{ and } \phi_2\in T^{A_2}(j,n-k-j)$$ where $A_1=\{0,1,\ldots,l\}$ and $A_2=\{l+1,l+2,\ldots,l+j+1\}$.  Notice that by joining the columns of $\phi_1$ and $\phi_2$, we obtain an $A$-tableau $\phi$ with $n-l-j$ columns whose lengths are in the set $A=A_1\cup A_2=\{0,1,\ldots,l+j+1\}$.  That is, $\phi\in T^A(l+j+1,n-l-j)$.  Then,
\begin{equation*}
\sum_{\phi\in T^A(l+j+1,n-l-j)}\omega_A(\phi)=\sum^{n-j}_{k=l}\left\{\sum_{\phi_1\in T^{A_1}(l,\;k-l)}\omega_{A_1}(\phi_1)\right\}\left\{\sum_{\phi_2\in T^{A_2}(j,\;n-k-j)}\omega_{A_2}(\phi_2)\right\}.
\end{equation*}
\noindent Note that
\begin{eqnarray*}
\sum_{\phi_2\in T^{A_2}(j,\;n-k-j)}\omega_{A_2}(\phi_2)&=&\sum_{\phi_2\in T^{A_2}(j,\;n-k-j)}\prod_{c\in\phi_2}[m|c|+r]_q\\
&=&\sum_{l+1\le g_1\le\ldots\le g_{\;n-k-j}\le\;l+j+1}\prod^{n-k-j}_{i=1}[mg_i+r]_q\\
&=&\sum_{0\le g_1\le\ldots\le g_{\;n-k-j}\le j}\prod^{n-k-j}_{i=1}[mg_i+m(l+1)+r]_q.
\end{eqnarray*}

Thus,                                                           
\begin{eqnarray*}
\sum_{0\le g_1\le\ldots\le g_{n-l-j}\le l+j+1}\prod^{n-l-j}_{i=1}[mg_i+r]_q\qquad\qquad\qquad\qquad\qquad\qquad\qquad\qquad
\end{eqnarray*}

\begin{eqnarray*}
&=&\!\!\!\sum^{n-j}_{k=l}\!\!\left\{\sum_{0\le g_1\le\ldots\le g_{k-l}\le l}\prod^{k-l}_{i=1}\![mg_i+r]_q\!\right\}\!\!\left\{\sum_{0\le g_1\le\ldots\le g_{n\!-\!k\!-\!j}\le j}\!\!\prod^{n-k-j}_{i=1}\!\!\![mg_i+m(l+1)+r]_q\!\right\}\!.
\end{eqnarray*}
Then the $q$-analogue $W^*_{m,r}[n,k]$ satisfies the following convolution-type identity 
\begin{equation*}
W^*_{m,r}[n+1,l+j+1]_q=\sum^n_{k=0}W^*_{m,r}[k,l]_qW^*_{m,r+m(l+1)}[n-k,j]_q.
\end{equation*}

\smallskip
Using the same argument above with
\begin{eqnarray*}
&\phi_1&\;\mbox{be a tableau with}\;l-k\;\mbox{columns whose lengths are in}\\
&&\;A_1=\{0,1,\ldots,k\},\;\mbox{and}\\
&\phi_2&\;\mbox{be a tableau with $j-n+k$ columns whose lengths are in}\\
&&\;A_2=\{k,k+1,\ldots,n\}.
\end{eqnarray*}                                                                             
That is, $\phi_1\in T^{A_1}(k,l-k)$ and $\phi_2\in T^{A_2}(n-k,j-n+k)$. We can easily obtain the following convolution formula:
\begin{equation*}
W^*_{m,r}[l+j,n]_q=\sum^{n-j}_{k=l}W^*_{m,r}[l,k]_qW^*_{m,r+mk}[j,n-k]_q.
\end{equation*}
This can further be written as
$$W^*_{m,r}[s+p,t]_{q}=\sum_{k=max\{0,t-p\}}^{min\{t,s\}}W^*_{m,r}[s,k]_{q}W^*_{m,r+mk}[p,t-k]_{q}.$$
Replacing $s$ with $s+i$, $p$ with $j$, and $t$ with $s+j$, we get
$$W^*_{m,r}[s+i+j,s+j]_{q}=\sum_{k=s}^{min\{s+j,s+i\}}W^*_{m,r}[s+i,k]_{q}W^*_{m,r+mk}[j,s+j-k]_{q}.$$
This gives the following LU factorization of the matrix
\begin{align*}
&
\begin{bmatrix}
 W^*_{m,r}[s,s]_{q} & W^*_{m,r}[s+1,s+1]_{q}  & \ldots  & W^*_{m,r}[s+n,s+n]_{q}  \\
 W^*_{m,r}[s+1,s]_{q} & W^*_{m,r}[s+2,s+1]_{q}  & \ldots  & W^*_{m,r}[s+n+1,s+n]_{q}  \\
 \vdots  &\vdots  &\ldots  &\vdots\\
 W^*_{m,r}[s+n,s]_{q} & W^*_{m,r}[s+n+1,s+1]_{q}  & \ldots  & W^*_{m,r}[s+2n,s+n]_{q}  
\end{bmatrix}\;\;\;\;\;\;\;\;\;\;\;\;\;\;\;\;\;\;\;\;\;\;\;\;\;\;\;\;\;\;\;\;\;\;\;\;\;\;\;\;\;\;\;\;\;\;\;\;\\
&\;\;\;\;\;\;\;\;\;\;\;\;\;\;\;\;\;\;\;\;=
\begin{bmatrix}
 W^*_{m,r}[s,s]_{q} & 0  & \ldots  & 0  \\
 W^*_{m,r}[s+1,s]_{q} & W^*_{m,r}[s+1,s+1]_{q}  & \ldots  & 0  \\
 \vdots  &\vdots  &\ldots  &\vdots\\
 W^*_{m,r}[s+n,s]_{q} & W^*_{m,r}[s+n,s+1]_{q}  & \ldots  & W^*_{m,r}[s+n,s+n]_{q}  
\end{bmatrix}\\
&\;\;\;\;\;\;\;\;\;\;\;\;\;\;\;\;\;\;\;\;\times
\begin{bmatrix}
 W^*_{m,r+ms}[0,0]_{q} & W^*_{m,r+ms}[1,1]_{q}  & \ldots  & W^*_{m,r+ms}[n,n]_{q}  \\
 0 & W^*_{m,r+m(s+1)}[1,0]_{q}  & \ldots  & W^*_{m,r+m(s+1)}[n,n-1]_{q}  \\
 \vdots  &\vdots  &\ldots  &\vdots\\
 0 & 0  & \ldots  & W^*_{m,r+m(s+n)}[n,0]_{q}  
\end{bmatrix}
.
\end{align*}
This implies that
\begin{equation*}
\det\left(W^*_{m,r}[s+i+j,s+j]_{q}\right)_{0\leq i,j\leq n}=\left( \prod_{k=0}^{n} W^*_{m,r}[s+k,s+k]_{q}\right) \left( \prod_{k=0}^{n}W^*_{m,r+m(s+k)}[k,0]_{q}\right). 
\end{equation*}
Using equation \eqref{initial1} and the fact that $W^*_{m,r}[n,n]_{q}=1$, we have the following theorem.

\begin{thm}\label{HT}
 For nonnegative integers $n$ and $k$, the Hankel transform for $W_{m,r}[n,k]_{q}$ is given by
 \begin{equation}\label{eqHT}
\det\left(W^*_{m,r}[s+i+j,s+j]_{q}\right)_{0\leq i,j\leq n}=\prod_{k=0}^{n}[m(s+k)+r]_{q}^k.
\end{equation}
\end{thm}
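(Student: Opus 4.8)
The plan is to extract the Hankel determinant directly from the LU factorization constructed above. The reindexed convolution-type identity
$$W^*_{m,r}[s+i+j,s+j]_{q}=\sum_{k=s}^{\min\{s+j,s+i\}}W^*_{m,r}[s+i,k]_{q}\,W^*_{m,r+mk}[j,s+j-k]_{q}$$
exhibits the matrix $\left(W^*_{m,r}[s+i+j,s+j]_{q}\right)_{0\le i,j\le n}$ as a product of a lower-triangular factor $L$ and an upper-triangular factor $U$, so its determinant is the product of the diagonal entries of $L$ and of $U$. It therefore suffices to evaluate the two diagonal products appearing in
$$\det\left(W^*_{m,r}[s+i+j,s+j]_{q}\right)_{0\leq i,j\leq n}=\left( \prod_{k=0}^{n} W^*_{m,r}[s+k,s+k]_{q}\right)\left( \prod_{k=0}^{n}W^*_{m,r+m(s+k)}[k,0]_{q}\right).$$

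First I would verify that every diagonal factor of $L$ equals $1$. From the symmetric-function form \eqref{symm}, when $n=k$ the index range $0\le j_1\le\cdots\le j_{n-k}\le k$ is empty, so the inner product is empty and $W_{m,r}[k,k]_{q}=q^{m\binom{k}{2}+kr}$. By the defining normalization $W^*_{m,r}[n,k]_{q}=q^{-m\binom{k}{2}-kr}W_{m,r}[n,k]_{q}$, the prefactor cancels exactly, giving $W^*_{m,r}[s+k,s+k]_{q}=1$ for every $k$. Hence the first product collapses to $1$, and the whole determinant is governed by the diagonal of $U$.

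The second step is to evaluate $\prod_{k=0}^{n}W^*_{m,r+m(s+k)}[k,0]_{q}$ using the initial condition \eqref{initial1}. Reading \eqref{initial1} with its role-parameter $k$ replaced by $s+k$ and its exponent $s$ replaced by $k$ yields $W^*_{m,r+m(s+k)}[k,0]_{q}=[m(s+k)+r]_{q}^{k}$. Substituting this into the product gives $\prod_{k=0}^{n}[m(s+k)+r]_{q}^{k}$, which is precisely the right-hand side of \eqref{eqHT}.

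Because both the triangular factorization and the determinant identity for a product of triangular matrices are already in place, the only genuine content of the theorem lies in these two evaluations; the sole pitfall is bookkeeping, namely matching the shifted parameters of \eqref{initial1} to the diagonal entries of $U$ and confirming that the empty-product convention in \eqref{symm} really forces $W^*_{m,r}[k,k]_{q}=1$. Once these are checked, multiplying the two products completes the proof.
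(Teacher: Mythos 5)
Your proposal is correct and follows essentially the same route as the paper: the paper likewise reads the determinant off the LU factorization displayed just before the theorem, then finishes by citing \eqref{initial1} and the fact that $W^*_{m,r}[n,n]_{q}=1$. Your only addition is to spell out why $W^*_{m,r}[s+k,s+k]_{q}=1$ (the empty-product case of \eqref{symm} cancelling against the normalization $q^{-m\binom{k}{2}-kr}$), which the paper asserts without proof, and your parameter substitution in \eqref{initial1} matches the paper's intended use.
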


When $q\to1$, the Hankel transform in Theorem \ref{HT} yields the Hankel transform for the $r$-Whitney numbers of the second kind. More precisely,
$$\det\left(W_{m,r}(s+i+j,s+j)\right)_{0\leq i,j\leq n}=\prod_{k=0}^{n}(m(s+k)+r)^k.$$
This further gives the Hankel transform for the classical Stirling numbers of the second kind when $(m,r)=(1,0)$. That is,
$$\det\left(S(s+i+j,s+j)\right)_{0\leq i,j\leq n}=\prod_{k=0}^{n}(s+k)^k.$$

\bigskip
\noindent{\bf Acknowledgement}. This research has been funded by Cebu Normal University (CNU) and the Commission  on Higher Education - Grants-in-Aid for Research (CHED-GIA).

\bigskip

\end{document}